\documentclass[a4paper,11pt,leqno]{amsart}
\usepackage{amssymb, amsmath, amsthm}
\usepackage[all]{xy}

\addtolength{\oddsidemargin}{-10mm}
\addtolength{\evensidemargin}{-10mm}
\addtolength{\textwidth}{20mm}

\newtheorem{thm}{Theorem}[section]
\newtheorem{crl}[thm]{Corollary}

\newtheorem{prp}[thm]{Proposition}

\theoremstyle{definition}
\newtheorem{dfn}[thm]{Definition}
\newtheorem{exa}[thm]{Example}

\theoremstyle{remark}
\newtheorem*{rem}{Remark}

\numberwithin{equation}{section}

\newcommand{\Z}{\mathbb{Z}}
\newcommand{\Q}{\mathbb{Q}}
\newcommand{\R}{\mathbb{R}}
\newcommand{\bk}{\mathbf{k}}
\newcommand{\frH}{\mathfrak{H}}
\newcommand{\frP}{\mathfrak{P}}
\newcommand{\abs}[1]{{\lvert #1 \rvert}}

\DeclareMathOperator{\dep}{\mathrm{dep}}

\title{Multiple zeta-star values and multiple integrals}
\author{Shuji Yamamoto}
\thanks{This work was supported in part by 
JSPS Grant-in-Aid for Young Scientists (S) (No.\ 21674001). }  
\address{Department of Mathematics, Faculty of Science and Technology, 
Keio University, 3-14-1 Hiyoshi, Kohoku-ku, Yokohama, 223-8522, JAPAN}
\email{yamashu@math.keio.ac.jp}
\subjclass[2010]{Primary 11M32; Secondary 40B05}

\begin{document}

\begin{abstract}      
We prove a kind of integral expressions 
for finite multiple harmonic sums and multiple zeta-star values. 
Moreover, we introduce a class of multiple integrals, 
associated with some combinatorial data (called 2-labeled posets). 
This class includes both multiple zeta and zeta-star values 
of Euler-Zagier type, and also several other types of multiple zeta values. 
We show that these integrals can be used to obtain 
some relations among such zeta values quite transparently. 
\end{abstract}

\maketitle

\section{Integral expression of finite multiple harmonic sums}
We begin with the finite multiple harmonic sums 
\[s_\bk(N)=\sum_{N=m_1\geq\cdots\geq m_n\geq 1}
\frac{1}{m_1^{k_1}\cdots m_n^{k_n}}, \]
where $\bk=(k_1,\ldots,k_n)$ is an $n$-tuple of positive integers 
and $N$ is a positive integer. 
When $k_1\geq 2$, by definition, their sum gives 
the multiple zeta-star values (MZSVs for short): 
\begin{equation}\label{eq:MZSV_finite}
\zeta^\star(\bk)
=\sum_{m_1\geq\cdots\geq m_n\geq 1}\frac{1}{m_1^{k_1}\cdots m_n^{k_n}}
=\sum_{N=1}^\infty s_\bk(N). 
\end{equation}

One of the basic properties of these finite multiple sums 
is the following relation, called the duality: 

\begin{thm}[\cite{H,K}]\label{thm:duality}
For any index $\bk\in(\Z_{\geq 1})^n$ and $N\geq 0$, we have
\begin{equation}\label{eq:duality}
\sum_{i=0}^{N-1}(-1)^i\binom{N-1}{i} s_\bk(i+1)=s_{\bk^*}(N). 
\end{equation}
Here $\bk^*$ denotes the `transpose' of $\bk$ (see below). 
\end{thm}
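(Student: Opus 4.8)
The plan is to realize each finite sum $s_\bk(N)$ as a Mellin-type integral
\[ s_\bk(N)=\int_0^1 g_\bk(t)\,t^{N-1}\,dt \]
for a suitable integrable function $g_\bk$ on $(0,1)$, and then to read off \eqref{eq:duality} from the elementary identity $\sum_{i=0}^{N-1}(-1)^i\binom{N-1}{i}t^i=(1-t)^{N-1}$ together with the substitution $t\mapsto 1-t$. Indeed, once the representation is available for all $N\ge 1$, one computes
\[ \sum_{i=0}^{N-1}(-1)^i\binom{N-1}{i}s_\bk(i+1) =\int_0^1 g_\bk(t)(1-t)^{N-1}\,dt =\int_0^1 g_\bk(1-u)\,u^{N-1}\,du, \]
so the whole statement reduces to proving that the reflection $t\mapsto 1-t$ carries $g_\bk$ to $g_{\bk^*}$, i.e.\ $g_\bk(1-t)=g_{\bk^*}(t)$. (The case $N=0$ is trivial, both sides being $0$.)

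To construct $g_\bk$ I would peel off the first entry: directly from the definition one has the recursion $s_{(k_1,\bk')}(N)=N^{-k_1}\sum_{m=1}^{N}s_{\bk'}(m)$ with $\bk'=(k_2,\dots,k_n)$. The two elementary operations on sequences occurring here—division by $N$, and partial summation followed by a single division by $N$—translate into operators on the integrand. A short Fubini computation shows that if $a_m=\int_0^1 g(s)s^{m-1}\,ds$ for all $m\ge 1$, then
\[ \frac{a_N}{N}=\int_0^1 (L_0 g)(t)\,t^{N-1}\,dt,\qquad \frac1N\sum_{m=1}^{N}a_m=\int_0^1 (L_1 g)(t)\,t^{N-1}\,dt, \]
where $(L_0 g)(t)=\int_t^1 g(s)\,\tfrac{ds}{s}$ and $(L_1 g)(t)=\int_0^t g(s)\,\tfrac{ds}{1-s}$. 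Starting from $g_{(1)}\equiv 1$ (which gives $s_{(1)}(N)=1/N$) and inducting on the depth, one obtains
\[ g_\bk=L_0^{k_1-1}L_1L_0^{k_2-1}L_1\cdots L_1 L_0^{k_n-1}\,\mathbf 1, \]
valid simultaneously for every $N\ge 1$.

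The decisive step is then purely combinatorial. Writing $L_0\mapsto 0$ and $L_1\mapsto 1$, the operator word above becomes the binary string $0^{k_1-1}1\,0^{k_2-1}1\cdots 1\,0^{k_n-1}$ of length $\abs{\bk}-1$ (with $\abs{\bk}=k_1+\cdots+k_n$), whose $1$'s occupy exactly the positions $k_1,\,k_1+k_2,\,\dots,\,k_1+\cdots+k_{n-1}$; that is, it is the indicator string of the subset of $\{1,\dots,\abs{\bk}-1\}$ encoding $\bk$, and the transpose $\bk^*$ is (through its description as the conjugate composition) precisely the index whose indicator string is the complementary, bit-swapped one. On the analytic side, the reflection $(Rg)(t)=g(1-t)$ satisfies the intertwining relations $RL_0=L_1R$ and $RL_1=L_0R$ (a one-line change of variables in the integrals defining $L_0,L_1$), together with $R\mathbf 1=\mathbf 1$. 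Pushing $R$ through the word thus swaps every letter while preserving their order, so $Rg_\bk$ is exactly the integrand attached to the bit-swapped string, namely $g_{\bk^*}$. This yields $g_\bk(1-t)=g_{\bk^*}(t)$ and hence \eqref{eq:duality}.

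I expect the main obstacle to lie in the analytic half: isolating the correct operators $L_0,L_1$ and justifying the representation $s_\bk(N)=\int_0^1 g_\bk(t)\,t^{N-1}\,dt$ for all $N$ at once. It is the partial-summation step that forces the kernel $1/(1-s)$ and integration from $0$, and one must check that the mild (at worst logarithmic) singularities of $g_\bk$ at the endpoints never obstruct Fubini. Once the representation and the single intertwining relation $RL_0=L_1R$ are in place, both the passage to $(1-t)^{N-1}$ and the identification of the reflected word with the transpose $\bk^*$ are formal.
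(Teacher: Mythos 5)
Your proposal is correct and is essentially the paper's own argument in different packaging: your $g_\bk(t_1)$ is exactly the inner $(\abs{\bk}-1)$-fold integral of the paper's Theorem \ref{thm:finite} (so your Mellin representation $s_\bk(N)=\int_0^1 g_\bk(t)t^{N-1}dt$ \emph{is} that theorem), and your intertwining relations $RL_0=L_1R$, $RL_1=L_0R$ together with the bit-flip combinatorics are precisely the variable-by-variable form of the paper's change of variables $t_j\mapsto 1-t_j$ on $\Delta(\bk)$ combined with $\delta^*(j)=1-\delta(j)$. The only real difference is that your induction on depth via the operators $L_0,L_1$ gives a cleaner, fully rigorous proof of the integral representation than the paper's proof-by-example, while the duality mechanism itself — binomial expansion of $(1-t)^{N-1}$ plus the reflection $t\mapsto 1-t$ — is identical.
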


For an index $\bk=(k_1,\ldots,k_n)$, we set 
\[\lvert\bk\rvert:=k_1+\cdots+k_n, \quad 
A(\bk):=\{k_1,k_1+k_2,\ldots,k_1+\cdots+k_{n-1}\}.\] 
Then the transpose $\bk^*$ is the index determined by the property 
\[\lvert\bk\rvert=\lvert\bk^*\rvert,\quad \{1,\ldots,\lvert\bk\rvert-1\}
=A(\bk)\amalg A(\bk^*). \] 
For example, the transpose of $(2,3)$ is $(1,2,1,1)$. 
It can be illustrated by the following picture: 
\[\begin{xy}
{(0,0) \ar @{o-o} (10,0)},
{(10,0) \ar @{-o} (10,-10)},
{(10,-10) \ar @{-o} (20,-10)},
{(20,-10) \ar @{-o} (30,-10)},
{(45,0)*+{2} \ar (35,0)},
{(45,-10)*+{3} \ar (35,-10)},
{(0,-25)*+{1} \ar (0,-15)},
{(10,-25)*+{2} \ar (10,-15)},
{(20,-25)*+{1} \ar (20,-15)},
{(30,-25)*+{1} \ar (30,-15)},
\end{xy}\]

The identity \eqref{eq:duality} is somewhat analogous to 
the well-known duality
\begin{equation}\label{eq:duality_MZV}
\zeta(a_1+1,\underbrace{1,\ldots,1}_{b_1-1},\ldots,
a_s+1,\underbrace{1,\ldots,1}_{b_s-1})
=\zeta(b_s+1,\underbrace{1,\ldots,1}_{a_s-1},\ldots,
b_1+1,\underbrace{1,\ldots,1}_{a_1-1}) 
\end{equation}
of the multiple zeta values (MZVs)
\[\zeta(\bk)=\sum_{m_1>\cdots>m_n>0}\frac{1}{m_1^{k_1}\cdots m_n^{k_n}}. \]
Since the latter duality follows immediately from 
the iterated integral expression 
\begin{equation}\label{eq:MZV}
\begin{split}
&\zeta(a_1+1,\underbrace{1,\ldots,1}_{b_1-1},\ldots,
a_s+1,\underbrace{1,\ldots,1}_{b_s-1})\\
&=\int_0^1\underbrace{\frac{dt}{t}\circ\cdots\circ\frac{dt}{t}}_{a_1}
\circ\underbrace{\frac{dt}{1-t}\circ\cdots\circ\frac{dt}{1-t}}_{b_1}
\circ\cdots\circ
\underbrace{\frac{dt}{t}\circ\cdots\circ\frac{dt}{t}}_{a_s}
\circ\underbrace{\frac{dt}{1-t}\circ\cdots\circ\frac{dt}{1-t}}_{b_s}, 
\end{split}
\end{equation}
it is natural to ask for a similar integral expression of 
finite multiple sum $s_\bk(N)$ from which \eqref{eq:duality} follows. 
Here is an answer: 

\begin{thm}\label{thm:finite}
Let $\bk=(k_1,\ldots,k_n)$ be an index, and put $k=\abs{\bk}=k_1+\cdots+k_n$. 
Moreover, define 
\begin{align*}
J(\bk)&=\{0,k_1,k_1+k_2,\ldots,k_1+\cdots+k_{n-1}\}=A(\bk)\cup\{0\},\\
\Delta(\bk)&=\Biggl\{(t_1,\ldots,t_k)\in[0,1]^k\Biggm|
\begin{array}{l}
t_j<t_{j+1} \text{ if $j\notin J(\bk)$},\\ 
t_j>t_{j+1} \text{ if $j\in J(\bk)$}
\end{array}\Biggr\}. 
\end{align*}
Then, for $N\geq 1$, we have 
\begin{equation}\label{eq:finite}
s_\bk(N)=\int_{\Delta(\bk)}t_1^{N-1}dt_1 
\omega_{\delta(2)}(t_2)\cdots\omega_{\delta(k)}(t_k), 
\end{equation}
where $\omega_0(t)=\frac{dt}{t}$, $\omega_1(t)=\frac{dt}{1-t}$ and
\begin{equation}\label{eq:delta}
\delta(j)=\begin{cases}
0 & (j-1\notin J(\bk)), \\
1 & (j-1\in J(\bk)). 
\end{cases}
\end{equation}
\end{thm}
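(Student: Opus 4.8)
The plan is to induct on the depth $n=\dep(\bk)$, using as a guide the elementary recursion
\[
s_\bk(N)=\frac{1}{N^{k_1}}\sum_{m=1}^{N}s_{\bk'}(m),\qquad \bk'=(k_2,\ldots,k_n),
\]
which drops out of the definition of $s_\bk(N)$ by fixing $m_1=N$ and grouping the remaining terms according to the value of $m_2=m$. On the integral side this recursion should correspond to \emph{integrating out the first block} of variables $t_1,\ldots,t_{k_1}$: the block length $k_1$ will produce the factor $N^{-k_1}$, and the sum $\sum_{m=1}^{N}$ will be manufactured by the single $\omega_1$ sitting at the start of the second block. Since every sum involved is finite, there are no convergence issues, and since the integrand of \eqref{eq:finite} is a product of one-variable forms over a region in $[0,1]^k$, Fubini lets me integrate the block-$1$ variables first.

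For the base case $n=1$ one has $J(\bk)=\{0\}$, so $\Delta(\bk)$ is the simplex $0<t_1<\cdots<t_k<1$ and every $\omega_{\delta(j)}$ with $j\ge 2$ is $\omega_0=dt/t$. Integrating from the bottom up, $\int_0^{t_2}t_1^{N-1}\,dt_1=t_2^{N}/N$, and each further step $\int_0^{t_{j+1}}(\,\cdot\,)\,t_j^{-1}\,dt_j$ supplies another factor $1/N$ while restoring the power $N$; after $k$ steps this gives $N^{-k}=s_{(k)}(N)$.

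For the inductive step I would first record the combinatorial fact that in $\Delta(\bk)$ the first block is coupled to the remaining variables only through the single relation $t_{k_1}>t_{k_1+1}$, the descent at $k_1\in J(\bk)$, because the defining inequalities compare only consecutive indices. Setting $\tau_j=t_{k_1+j}$ and using that the elements of $J(\bk)$ which are $\ge k_1$ are exactly $k_1+J(\bk')$, together with \eqref{eq:delta}, one checks that the constraints and forms carried by $\tau_1,\ldots,\tau_{k-k_1}$ reproduce precisely $\Delta(\bk')$ and its forms, and that the form at $\tau_1$ is $\omega_1$ (since $k_1\in J(\bk)$). Holding $\tau_1$ fixed, the inner integral over the ascending chain $t_1<\cdots<t_{k_1}$ with $t_{k_1}>\tau_1$ evaluates, just as in the base case but with the top variable confined to $(\tau_1,1)$, to
\[
F(\tau_1)=\int_{\substack{0<t_1<\cdots<t_{k_1}<1\\ t_{k_1}>\tau_1}}t_1^{N-1}\,dt_1\,\frac{dt_2}{t_2}\cdots\frac{dt_{k_1}}{t_{k_1}}
=\frac{1}{N^{k_1-1}}\int_{\tau_1}^{1}t^{N-1}\,dt=\frac{1-\tau_1^{N}}{N^{k_1}}.
\]
Now I combine $F$ with the form at $\tau_1$: the product $F(\tau_1)\,\omega_1(\tau_1)=N^{-k_1}\bigl(\sum_{m=1}^{N}\tau_1^{m-1}\bigr)d\tau_1$ is regular at $\tau_1=1$ because $1-\tau_1^{N}$ cancels the pole of $\omega_1$. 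Substituting this into the surviving integral over $\Delta(\bk')$ and interchanging the finite sum with the integral yields $N^{-k_1}\sum_{m=1}^{N}\int_{\Delta(\bk')}\tau_1^{m-1}\,d\tau_1\,\omega_{\delta'(2)}(\tau_2)\cdots$, where $\delta'$ is the assignment \eqref{eq:delta} for $\bk'$. Each summand is $s_{\bk'}(m)$ by the induction hypothesis, so the whole integral equals $N^{-k_1}\sum_{m=1}^{N}s_{\bk'}(m)=s_\bk(N)$.

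I expect the genuine obstacle to lie not in the analysis but in the bookkeeping of the inductive step: verifying cleanly that deleting the first block leaves \emph{exactly} the data $\bigl(\Delta(\bk'),\delta'\bigr)$, i.e.\ that the up/down pattern and \eqref{eq:delta} are compatible with the shift $j\mapsto j-k_1$ and that $t_{k_1}>\tau_1$ is the only link between the two parts. Once this compatibility is in place, the analytic input reduces to the two elementary integrals above, and the geometric-series identity $\sum_{m=1}^{N}\tau_1^{m-1}=(1-\tau_1^{N})/(1-\tau_1)$ repackages $F(\tau_1)\,\omega_1(\tau_1)$ into precisely the summation $\sum_{m=1}^{N}$ demanded by the recursion.
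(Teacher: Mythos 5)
Your argument is correct, and it earns a comparison because it is organized quite differently from the paper's own proof. The paper proves the theorem by example: it computes the single case $\bk=(2,1,2)$ by integrating the variables one at a time from $t_1$ upward---each $\omega_0$-integration preserving the current nested sum at the cost of a factor $1/m$, each $\omega_1$-integration expanding $(1-t^m)/(1-t)$ as a geometric sum to create a new summation index---and then asserts that ``the general pattern will be understood.'' Your induction on the depth $n$ rests on exactly the same two elementary integrals (your $F(\tau_1)$ and the geometric-series step at the $\omega_1$), but it repackages the variable-by-variable computation as a block-by-block recursion $s_\bk(N)=N^{-k_1}\sum_{m=1}^{N}s_{\bk'}(m)$, and it isolates as an explicit combinatorial check the point the paper leaves implicit: the elements of $J(\bk)$ that are $\geq k_1$ are exactly $k_1+J(\bk')$, so the first ascending chain $t_1<\cdots<t_{k_1}$ is attached to the remaining variables only through $t_{k_1}>t_{k_1+1}$, and deleting it leaves precisely the data $\bigl(\Delta(\bk'),\delta'\bigr)$ with an $\omega_1$ at the bottom vertex---all of which you verify correctly. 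What the paper's version buys is brevity and a transparent display of how the nested sums $\sum_{N\geq m\geq l\geq 1}$ materialize from the forms; what yours buys is completeness: the appeal to a ``general pattern'' is replaced by an induction hypothesis quantified over all $N\geq 1$, which is exactly the strength needed, since the inductive step invokes $s_{\bk'}(m)$ for every $m\leq N$. Both proofs are valid; yours is the one to write if the example-based argument is deemed insufficiently rigorous.
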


\begin{rem}
We include $0$ in the set $J(\bk)$ to set $\delta(1)=1$ in \eqref{eq:delta}, 
though this value is not used in the above theorem. 
We need it in Corollary \ref{cor:MZSV}.  
\end{rem}

\begin{proof}
We consider the case $\bk=(2,1,2)$ as an example, 
and then the general pattern will be understood. 
In this case, we should prove 
\[s_{(2,1,2)}(N)=\int_{t_1<t_2>t_3>t_4<t_5}t_1^{N-1}dt_1\frac{dt_2}{t_2}
\frac{dt_3}{1-t_3}\frac{dt_4}{1-t_4}\frac{dt_5}{t_5}. \]
(Here, implicitly, the inequalities $0\leq t_i\leq 1$ are assumed.) 
The right-hand side is computed by repeating single integrals, i.e., 
\begin{align*}
\int_0^{t_2}t_1^{N-1}dt_1&=\frac{t_2^N}{N},\\
\frac{1}{N}\int_{t_3}^1t_2^N\frac{dt_2}{t_2}&=\frac{1-t_3^N}{N^2},\\
\frac{1}{N^2}\int_{t_4}^1(1-t_3^N)\frac{dt_3}{1-t_3}
&=\sum_{N\geq m\geq 1}\frac{1-t_4^m}{N^2m},\\
\sum_{N\geq m\geq 1}\frac{1}{N^2m}\int_0^{t_5}(1-t_4^m)\frac{dt_4}{1-t_4}
&=\sum_{N\geq m\geq l\geq 1}\frac{t_5^l}{N^2ml},\\
\sum_{N\geq m\geq l\geq 1}\frac{1}{N^2ml}\int_0^1t_5^l\frac{dt_5}{t_5}
&=\sum_{N\geq m\geq l\geq 1}\frac{1}{N^2ml^2}. 
\end{align*}
The last sum is exactly $s_{(2,1,2)}(N)$. 
\end{proof}

To deduce Theorem \ref{thm:duality} from Theorem \ref{thm:finite}, 
note that there is a bijection 
\begin{equation}\label{eq:s=1-t}
\Delta(\bk)\ni(t_1,\ldots,t_k)\longmapsto(1-t_1,\ldots,1-t_k)
\in\Delta(\bk^*)
\end{equation}
and that the map $\delta^*$ associated with $\bk^*$ as in \eqref{eq:delta} 
satisfies $\delta^*(j)=1-\delta(j)$ for $j=2,\ldots,k$. 
Hence, by changing of the integral variables $s_j=1-t_j$, we obtain 
\begin{align*}
s_{\bk^*}(N)
&=\int_{\Delta(\bk^*)}s_1^{N-1}ds_1\,\omega_{\delta^*(2)}(s_2)
\cdots\omega_{\delta^*(k)}(s_k)\\
&=\int_{\Delta(\bk)}(1-t_1)^{N-1}dt_1\omega_{\delta(2)}(t_2)
\cdots\omega_{\delta(k)}(t_k)\\
&=\sum_{i=0}^{N-1}(-1)^i\binom{N-1}{i}
\int_{\Delta(\bk)}t_1^idt_1\omega_{\delta(2)}(t_2)
\cdots\omega_{\delta(k)}(t_k)\\
&=\sum_{i=0}^{N-1}(-1)^i\binom{N-1}{i}s_{\bk}(i+1). 
\end{align*}

By \eqref{eq:MZSV_finite} and \eqref{eq:finite}, 
we also obtain an integral expression of MZSVs. 

\begin{crl}\label{cor:MZSV}
In the same notation as in Theorem \ref{thm:finite}, 
assume $k_1\geq 2$. Then 
\begin{equation}\label{eq:MZSV}
\zeta^\star(\bk)=\int_{\Delta(\bk)}
\omega_{\delta(1)}(t_1)\cdots\omega_{\delta(k)}(t_k). 
\end{equation}
\end{crl}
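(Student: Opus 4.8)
The plan is to obtain the MZSV integral by summing the finite-sum formula of Theorem~\ref{thm:finite} over $N$. First I would invoke \eqref{eq:MZSV_finite}, which under the hypothesis $k_1\geq 2$ expresses $\zeta^\star(\bk)=\sum_{N\geq 1}s_\bk(N)$ as a convergent series, and then substitute \eqref{eq:finite} into each term:
\[
\zeta^\star(\bk)=\sum_{N=1}^\infty\int_{\Delta(\bk)}t_1^{N-1}\,dt_1\,
\omega_{\delta(2)}(t_2)\cdots\omega_{\delta(k)}(t_k).
\]

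The central step is to interchange this summation with the integration over $\Delta(\bk)$. Since the forms $\omega_0(t)=dt/t$ and $\omega_1(t)=dt/(1-t)$ are non-negative on $(0,1)$ and $t_1^{N-1}\geq 0$, every integrand is non-negative, so Tonelli's theorem licenses the exchange with equality valid in $[0,+\infty]$. Performing the geometric sum $\sum_{N\geq 1}t_1^{N-1}=(1-t_1)^{-1}$ inside the integral then yields
\[
\zeta^\star(\bk)=\int_{\Delta(\bk)}\frac{dt_1}{1-t_1}\,
\omega_{\delta(2)}(t_2)\cdots\omega_{\delta(k)}(t_k).
\]
Finally I would identify the new first form: since $0\in J(\bk)$ by construction, \eqref{eq:delta} gives $\delta(1)=1$, hence $\frac{dt_1}{1-t_1}=\omega_1(t_1)=\omega_{\delta(1)}(t_1)$ --- precisely the role of $0\in J(\bk)$ anticipated in the Remark --- and \eqref{eq:MZSV} follows.

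I expect the only delicate point to be finiteness, since Tonelli yields the identity in $[0,+\infty]$ for any $\bk$, so the real content of the hypothesis $k_1\geq 2$ is that both sides are finite numbers. On the series side this is the standard convergence of $\zeta^\star(\bk)$; on the integral side it is visible in the behaviour of the forms as $t_1,t_2\to 1$. Indeed $k_1\geq 2$ forces $1\notin J(\bk)$, so $\delta(2)=0$ and $\omega_{\delta(2)}(t_2)=dt_2/t_2$, which tames the logarithmic singularity produced by integrating $\omega_1(t_1)$ over $0<t_1<t_2$; by contrast, if $k_1=1$ the two adjacent forms $\omega_1(t_1)\,\omega_1(t_2)$ would render the integral divergent.

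As an alternative that does not cite Theorem~\ref{thm:finite} as a black box, one can rerun its proof verbatim with the very first integration replaced by $\int_0^{t_2}\frac{dt_1}{1-t_1}=-\log(1-t_2)=\sum_{N\geq 1}\frac{t_2^N}{N}$; this inserts the outer summation over $N$ at the outset, and the remaining single integrations proceed exactly as before to produce $\sum_{N\geq 1}s_\bk(N)=\zeta^\star(\bk)$.
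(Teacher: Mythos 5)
Your proof is correct and takes essentially the same route as the paper: the paper's entire proof is the one-line observation that \eqref{eq:MZSV} follows from \eqref{eq:MZSV_finite} and \eqref{eq:finite}, i.e.\ summing the finite-sum formula over $N$, performing the geometric series $\sum_{N\geq 1}t_1^{N-1}=(1-t_1)^{-1}$, and using $\delta(1)=1$ (which is exactly why $0$ was put into $J(\bk)$, as the Remark after Theorem \ref{thm:finite} says). Your Tonelli justification of the interchange and the discussion of why $k_1\geq 2$ guarantees finiteness merely make explicit what the paper leaves implicit.
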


\begin{exa}
For $\bk=(2,1)$, we have 
\begin{equation}\label{eq:MZSV(2,1)}
\zeta^\star(2,1)=\int_{t_1<t_2>t_3}
\frac{dt_1}{1-t_1}\frac{dt_2}{t_2}\frac{dt_3}{1-t_3}. 
\end{equation}
This integral can be divided into two parts: 
\[\int_{t_1<t_2>t_3}\frac{dt_1}{1-t_1}\frac{dt_2}{t_2}\frac{dt_3}{1-t_3}
=\biggl(\int_{t_1<t_3<t_2}+\int_{t_3<t_1<t_2}\biggr)
\frac{dt_1}{1-t_1}\frac{dt_2}{t_2}\frac{dt_3}{1-t_3}. \]
By the iterated integral expression \eqref{eq:MZV} of MZVs, 
the right-hand side is equal to $\zeta(2,1)+\zeta(2,1)$. 
Therefore, from the integral expression \eqref{eq:MZSV(2,1)}, 
one obtains 
\[\zeta^\star(2,1)=2\zeta(2,1). \]
Note that this is different from the relation 
\[\zeta^\star(2,1)=\zeta(2,1)+\zeta(3) \] 
obtained from the series expressions. 
By comparing these two relations, 
one proves Euler's famous relation $\zeta(2,1)=\zeta(3)$. 

More generally, 
from the integral and series expressions of $\zeta^\star(k-1,1)$, 
one can show the sum formula for double zeta values 
\[\zeta(k-1,1)+\zeta(k-2,2)+\cdots+\zeta(2,k-2)=\zeta(k) 
\quad (k\geq 3)\]
in a similar manner. 
\end{exa}

\section{Multiple integrals associated with 2-labeled finite posets}
Now we define a class of integrals 
which includes both MZVs \eqref{eq:MZV} and MZSVs \eqref{eq:MZSV}. 
Recall that a finite poset is a finite set endowed with a partial order. 
In the following, we omit the word `finite' since we only consider 
finite posets. 

\begin{dfn}
\begin{enumerate}
\item A \emph{2-labeled poset} is a pair $X=(X,\delta_X)$ consisting of 
a poset $X$ and a map $\delta_X\colon X\to\{0,1\}$, 
called the \emph{labeling map}. 
The \emph{weight}, denoted by $\abs{X}$, 
is the number of elements of the underlying set $X$, 
and the \emph{depth}, denoted by $\dep(X)$, 
is the number of $x\in X$ such that $\delta(x)=1$. 
\item A 2-labeled poset $X$ is said \emph{admissible} 
if $\delta_X(x)=1$ for all minimal $x\in X$ and 
$\delta_X(x)=0$ for all maximal $x\in X$. 
\item For any poset $X$, we put 
\[\Delta(X):=\bigl\{(t_x)_{x\in X}\in[0,1]^X\bigm| 
t_x<t_y \text{ if } x<y\bigr\}. \]
\item For an admissible 2-labeled poset $X$, 
we define the associated integral by 
\begin{equation}\label{eq:I}
I(X):=\int_{\Delta(X)}\prod_{x\in X}\omega_{\delta_X(x)}(t_x). 
\end{equation}
Here $\omega_0(t)=\frac{dt}{t}$ and $\omega_1(t)=\frac{dt}{1-t}$ are 
the same notation as in Theorem \ref{thm:finite}. 
\end{enumerate}
\end{dfn}

\begin{rem}
For the empty 2-labeled poset, denoted $\varnothing$, 
we put $I(\varnothing)=1$. This is compatible with the usual definition 
$\zeta(\varnothing)=\zeta^\star(\varnothing)=1$, 
where $\varnothing$ denotes the index of length $0$. 
\end{rem}

We use Hasse diagrams to indicate 2-labeled posets, 
with vertices $\circ$ and $\bullet$ corresponding to $\delta(x)=0$ and $1$, 
respectively. For example, 
\[X=\{1<2<3<4<5\}\text{ and }
\bigl(\delta(1),\ldots,\delta(5)\bigr)=(1,0,1,0,0)\]
is represented as the diagram 
\[\begin{xy}
{(0,8) \ar @{o-o} (0,4)},
{(0,4) \ar @{-{*}} (0,0)},
{(0,0) \ar @{-o} (0,-4)},
{(0,-4) \ar @{-{*}} (0,-8)}
\end{xy} \]
This 2-labeled poset is admissible, and we have 
\[I(X)=\int_{t_1<t_2<t_3<t_4<t_5}
\frac{dt_1}{1-t_1}\frac{dt_2}{t_2}\frac{dt_3}{1-t_3}
\frac{dt_4}{t_4}\frac{dt_5}{t_5}
=\zeta(3,2). \]
In general, the iterated integral expression of a MZV 
is equal to the integral $I(X)$ 
associated with an admissible 2-labeled \emph{totally ordered} set $X$, 
and the weight and the depth of the MZV coincide with those of $X$. 

Another example: Corollary \ref{cor:MZSV} for $\bk=(2,3)$ gives 
\[\zeta^\star(2,3)=I\left(\ 
\begin{xy}
{(0,-4) \ar @{{*}-o} (4,0)},
{(4,0) \ar @{-{*}} (8,-4)},
{(8,-4) \ar @{-o} (12,0)},
{(12,0) \ar @{-o} (16,4)}
\end{xy}\ \right). \]

Here we collect some basic constructions on 2-labeled posets, 
and relations between the associated integrals.

\begin{dfn}
\begin{enumerate}
\item For 2-labeled posets $X$ and $Y$, 
one can naturally define their direct sum $X\amalg Y$: 
Its underlying poset is the direct sum of finite sets $X$ and $Y$ 
endowed with the partial order 
\begin{align*}
x\leq y \text{ in $X\amalg Y$}\iff 
&x,y\in X\text{ and }x\leq y\text{ in $X$ or }\\
&x,y\in Y\text{ and }x\leq y\text{ in $Y$.}
\end{align*}
The map $\delta_{X\amalg Y}\colon X\amalg Y\to\{0,1\}$ 
is the direct sum of the maps $\delta_X\colon X\to\{0,1\}$ and 
$\delta_Y\colon Y\to\{0,1\}$. 
\item Let $X=(X,\leq)$ be a poset, and $a,b\in X$ not comparable, i.e., 
neither $a\leq b$ nor $b\leq a$ hold. 
Then we denote by $X_a^b$ the poset with the same underlying set $X$ 
and endowed with the order $\leq_a^b$ defined by 
\[x\leq_a^b y\iff 
x\leq y,\text{ or } x\leq a\text{ and }b\leq y. \]
We call $X_a^b$ the refinement of $X$ obtained by imposing $a<b$. 
If $X$ is a 2-labeled poset, then $X_a^b$ also becomes 
a 2-labeled poset with the same labeling map. 
\item For a 2-labeled poset $X$, we define its \emph{transpose} $X^*$ 
as the 2-labeled poset consisting of the same underlying set as $X$ 
endowed with the reversed order (i.e.\ $x\leq y$ in $X^*$ if and only if 
$y\leq x$ in $X$), and the labeling map $\delta_{X^*}(x)=1-\delta_X(x)$. 
\end{enumerate}
\end{dfn}

\begin{prp}
\begin{enumerate}
\item If $X$ and $Y$ are admissible 2-labeled posets, 
then $X\amalg Y$ is admissible and 
\begin{equation}\label{eq:prod}
I(X\amalg Y)=I(X)\cdot I(Y). 
\end{equation}
\item If $X$ is an admissible 2-labeled poset, and $a$ and $b\in X$ are 
not comparable, then both $X_a^b$ and $X_b^a$ are admissible and 
\begin{equation}\label{eq:sum}
I(X)=I(X_a^b)+I(X_b^a). 
\end{equation}
\item If $X$ is an admissible 2-labeled poset, 
then the transpose $X^*$ is admissible and 
\begin{equation}\label{eq:dual}
I(X^*)=I(X). 
\end{equation}
\end{enumerate}
\end{prp}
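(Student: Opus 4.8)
The plan is to reduce every assertion to an analysis of the integration domain $\Delta(\cdot)$, treating each $I(X)$ as an ordinary Lebesgue integral of the positive densities $f_0(t)=1/t$ and $f_1(t)=1/(1-t)$ over a region in a cube. The three admissibility claims are purely combinatorial, and I would dispose of them first. The common principle is that each construction is governed by its effect on extremal elements: the minimal elements of $X\amalg Y$ are exactly those of $X$ together with those of $Y$; the minimal elements of a refinement $X_a^b$ form a subset of the minimal elements of $X$, since adjoining a relation can only destroy minimality; and the minimal elements of $X^*$ are precisely the maximal elements of $X$. In each case the admissibility of $X$ (and $Y$) then forces every minimal element of the new poset to carry label $1$ and every maximal element to carry label $0$; for $X^*$ one additionally uses the label flip, so that $\delta_X=0$ on maximal elements becomes $\delta_{X^*}=1$ on the corresponding minimal elements.

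For the identity in (1), I would note that since no element of $X$ is comparable to any element of $Y$ in $X\amalg Y$, the defining inequalities decouple and $\Delta(X\amalg Y)=\Delta(X)\times\Delta(Y)$ inside $[0,1]^{X}\times[0,1]^{Y}$. The integrand $\prod_{z}\omega_{\delta(z)}(t_z)$ factors as the product of the integrands for $X$ and for $Y$, so \eqref{eq:prod} follows at once from Fubini's theorem. For (2), the key geometric fact is that, because $a$ and $b$ are incomparable, the null hyperplane $t_a=t_b$ splits $\Delta(X)$ into the two pieces $\{t_a<t_b\}$ and $\{t_b<t_a\}$, and these are exactly $\Delta(X_a^b)$ and $\Delta(X_b^a)$. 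Since the labeling, and hence the integrand, is unchanged by refinement, integrating over the two halves yields \eqref{eq:sum}.

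The remaining identity (3) I would prove exactly as in the deduction of Theorem \ref{thm:duality} from Theorem \ref{thm:finite}, via the substitution $s_x=1-t_x$. This is a measure-preserving involution of the cube that reverses every inequality, hence carries $\Delta(X)$ bijectively onto $\Delta(X^*)$; at the same time it interchanges the two densities, since $f_0(1-s)=1/(1-s)=f_1(s)$ and $f_1(1-s)=1/s=f_0(s)$, which matches the flip $\delta_{X^*}=1-\delta_X$. Changing variables in $I(X)$ therefore produces $I(X^*)$, proving \eqref{eq:dual}.

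The one step deserving care, and the only place I expect real friction, is the claim in (2) that $\Delta(X_a^b)$ coincides with $\{(t_x)\in\Delta(X): t_a<t_b\}$ rather than a strictly smaller region. Refining by $a<b$ formally imposes not only $t_a<t_b$ but also $t_x<t_y$ for every pair with $x\leq a$ and $b\leq y$. I would check that these extra inequalities are automatic: on $\Delta(X)$ one already has $t_x\leq t_a$ and $t_b\leq t_y$, so once $t_a<t_b$ is assumed the chain $t_x\leq t_a<t_b\leq t_y$ forces $t_x<t_y$. This is precisely where the definition of the refined order $\leq_a^b$ and the incomparability of $a$ and $b$ enter, and verifying it closes the argument.
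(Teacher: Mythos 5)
Your proposal is correct and follows essentially the same route as the paper: the paper dismisses (1) and (2) as easily verified and proves \eqref{eq:dual} by the same substitution $(t_x)\mapsto(1-t_x)$ generalizing \eqref{eq:s=1-t} that you use. Your write-up simply supplies the details the paper leaves implicit (Fubini for the decoupled domain, the measure-zero hyperplane $t_a=t_b$, and the check that the refinement's extra inequalities $t_x<t_y$ for $x\leq a$, $b\leq y$ are automatic), all of which are accurate.
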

\begin{proof}
All assertions are easily verified. 
Note that \eqref{eq:dual} is shown by making the change of variables 
\[\Delta(X)\ni(t_x)\longmapsto (1-t_x)\in \Delta(X^*), \]
which is a generalization of \eqref{eq:s=1-t}. 
\end{proof}

\begin{rem}
The shuffle relation for the MZVs can be derived from 
the identities \eqref{eq:prod} and \eqref{eq:sum} 
(see also the remark after Corollary \ref{cor:sum of MZVs}). 
On the other hand, the identity \eqref{eq:dual} is a natural generalization 
of the duality \eqref{eq:duality_MZV} for the MZVs. 
\end{rem}

\begin{crl}\label{cor:sum of MZVs}
For any 2-labeled poset $X$, the integral $I(X)$ can be expressed 
as the sum of a finite number of MZVs of weight $\abs{X}$ 
and depth $\dep(X)$. 
\end{crl}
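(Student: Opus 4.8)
The plan is to prove this by induction on the number of incomparable pairs in $X$, using only the refinement identity \eqref{eq:sum} together with the identification of chains as MZVs. (Parts (1) and (3) of the Proposition are not needed.) First I would record the base case: by the discussion following the definition of $I(X)$, whenever $X$ is an admissible 2-labeled \emph{totally ordered} set, the integral $I(X)$ is exactly the iterated integral expression of a single MZV, and its weight and depth equal $\abs{X}$ and $\dep(X)$. Thus a totally ordered $X$ already gives a (one-term) sum of MZVs with the required invariants.

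For the inductive step, suppose the admissible 2-labeled poset $X$ is not totally ordered, so there exist incomparable $a,b\in X$. Applying part (2) of the Proposition yields
\[I(X)=I(X_a^b)+I(X_b^a),\]
where both refinements $X_a^b$ and $X_b^a$ are again admissible. The crucial point is that a refinement changes neither the underlying set nor the labeling map $\delta_X$; hence $\abs{X_a^b}=\abs{X_b^a}=\abs{X}$ and $\dep(X_a^b)=\dep(X_b^a)=\dep(X)$. Moreover, imposing $a<b$ (respectively $b<a$) makes the previously incomparable pair $\{a,b\}$ comparable while destroying no existing comparability, so each refinement has strictly fewer incomparable pairs than $X$. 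By the inductive hypothesis, $I(X_a^b)$ and $I(X_b^a)$ are each finite sums of MZVs of weight $\abs{X}$ and depth $\dep(X)$, and therefore so is $I(X)$.

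Since a finite poset has only finitely many pairs of elements, repeatedly refining terminates at totally ordered sets, so the induction is well-founded and the corollary follows. The argument is essentially bookkeeping, and I do not expect a serious obstacle: the only points that merit explicit verification are that refinement preserves admissibility and the two invariants (both immediate once one observes that the underlying set and labeling are untouched, the former being supplied by part (2) of the Proposition), and that each refinement strictly decreases the incomparability count so that the recursion bottoms out exactly at the chains, which are the MZVs. The mildest care is needed in justifying this strict decrease, which holds because $\{a,b\}$ passes from incomparable to comparable while no pair can lose comparability under a refinement.
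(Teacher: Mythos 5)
Your proof is correct and takes essentially the same approach as the paper: the paper also applies \eqref{eq:sum} repeatedly to reduce to 2-labeled totally ordered sets (the MZV integrals), observing that the underlying set and labeling map, hence weight and depth, are unchanged. Your explicit induction on the number of incomparable pairs merely formalizes the paper's phrase ``by using \eqref{eq:sum} several times,'' supplying the termination argument the paper leaves implicit.
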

\begin{proof}
By using \eqref{eq:sum} several times, we express $I(X)$ 
as a sum of the integrals associated with 2-labeled totally ordered sets, 
i.e., the integral expressions of MZVs. 
Each of these 2-labeled totally ordered sets consists of 
the same underlying set and labeling map as $X$ and 
a total order extending the partial order of $X$. 
In particular, these have the same weight and depth as $X$. 
\end{proof}

\begin{rem}
There is an algebraic formalism for the integrals $I(X)$, 
similar to the well-known pair of 
the shuffle algebra $\frH=\Q\langle x,y\rangle$ 
and the homomorphism $Z\colon\frH^0=\Q\oplus x\frH y\to\R$. 

We write $\frP$ for the $\Q$-vector space generated by 
all isomorphism classes of 2-labeled posets, 
and define a product on it by $[X]\cdot[Y]=[X\amalg Y]$. 
Then, the subspace $\frP^0$ generated by admissible 2-labeled posets 
is a subalgebra, and the map $I\colon \frP^0\to\R$, 
defined by linearity, is indeed a $\Q$-algebra homomorphism. 

In fact, there exists a surjective $\Q$-algebra homomorphism 
$\rho\colon\frP\to\frH$ satisfying 
$\rho(\frP^0)=\frH^0$ and $Z\circ\rho=I$. 
This is defined as the unique homomorphism 
whose kernel is generated by $[X]-[X_a^b]-[X_b^a]$ 
and which sends totally ordered $[X]$ to a monomial in $\frH$ 
encoding $\delta_X$ appropriately. 
\end{rem}

\section{Other examples}
In this section, we consider some values representable 
by the integrals $I(X)$, other than MZVs and MZSVs. 

\subsection{Arakawa-Kaneko zeta values}
The Arakawa-Kaneko multiple zeta function \cite{AK} is defined as 
\[\xi_k(s)=\frac{1}{\Gamma(s)}\int_0^\infty
\frac{Li_k(1-e^{-t})}{1-e^{-t}}e^{-t}t^{s-1}dt\]
for an integer $k>0$, where $Li_k(x)=\sum_{n=1}^\infty\frac{x^n}{n^k}$ 
is the $k$-th polylogarithm function. 
By making the variable change $x=1-e^{-t}$, we can write it as 
\[\xi_k(s)=\frac{1}{\Gamma(s)}\int_0^1
Li_k(x)\bigl(-\log(1-x)\bigr)^{s-1}\frac{dx}{x}. \]
Now we use integral expressions 
\[Li_k(x)=\int_{x>t_1>\cdots>t_k>0}
\frac{dt_1}{t_1}\cdots\frac{dt_{k-1}}{t_{k-1}}\frac{dt_k}{1-t_k} \]
and 
\[-\log(1-x)=\int_0^x\frac{du}{1-u}, \]
to deduce for any integer $n>0$ 
\begin{equation}\label{eq:AK}
\xi_k(n)=\frac{1}{(n-1)!}\,I\left(\xybox{
{(0,-9) \ar @{{*}-o} (0,-4)}, 
{(0,-4) \ar @{.o} (0,4)}, 
{(0,4) \ar @{-o} (10,9)}, 
{(10,9) \ar @{-{*}} (6,-5)}, 
{(10,9) \ar @{-{*}} (16,-5)}, 
{(8,-5) \ar @{.} (14,-5)}, 
{(-1,-9) \ar @/^1mm/ @{-} ^k (-1,4)}, 
{(6,-6) \ar @/_1mm/ @{-} _{n-1} (16,-6)}, 
}\ \right). 
\end{equation}
Moreover, there are exactly $(n-1)!$ ways to impose 
a total order on the $n-1$ black vertices. 
Thus we have the identity 
\[\xi_k(n)=I\left(\xybox{
{(0,-9) \ar @{{*}-o} (0,-4)}, 
{(0,-4) \ar @{.o} (0,4)}, 
{(0,4) \ar @{-o} (5,9)}, 
{(10,-9) \ar @{{*}-{*}} (10,-4)}, 
{(10,-4) \ar @{.{*}} (10,4)}, 
{(10,4) \ar @{-} (5,9)}, 
{(-1,-9) \ar @/^1mm/ @{-} ^k (-1,4)}, 
{(11,-9) \ar @/_1mm/ @{-} _{n-1} (11,4)}, 
}\ \right). 
\]
Therefore, by \eqref{eq:MZSV}, we obtain Ohno's relation \cite[Theorem 2]{O} 
\[\xi_k(n)=\zeta^\star(k+1,\underbrace{1,\ldots,1}_{n-1}). \]

\subsection{Mordell-Tornheim zeta values}
Next, we consider the values of the Mordell-Tornheim multiple zeta functions 
\cite{M}
\[\zeta_{MT,r}(s_1,\ldots,s_r;s)=\sum_{m_1,\ldots,m_r>0}
\frac{1}{m_1^{s_1}\cdots m_r^{s_r}(m_1+\cdots+m_r)^s}. \]
For positive integers $k_1,\ldots,k_r,k$, it is easy to show that 
\begin{equation}\label{eq:MT}
\zeta_{MT,r}(k_1,\ldots,k_r;k)=I\left(\xybox{
{(-8,-15) \ar @{{*}-o} (-8,-10)}, 
{(-8,-10) \ar @{.o} (-8,-2)}, 
{(-8,-2) \ar @{-o} (0,2)}, 
{(8,-15) \ar @{{*}-o} (8,-10)}, 
{(8,-10) \ar @{.o} (8,-2)}, 
{(8,-2) \ar @{-o} (0,2)}, 
{(0,2) \ar @{-o} (0,7)}, 
{(0,7) \ar @{.o} (0,15)}, 
{(-5,-5) \ar @{.} (5,-5)}, 
{(-9,-15) \ar @/^1mm/ @{-} ^{k_1} (-9,-2)}, 
{(9,-15) \ar @/_1mm/ @{-} _{k_r} (9,-2)}, 
{(1,2) \ar @/_1mm/ @{-} _k (1,15)}
}\right). 
\end{equation}
For example, 
\begin{align*}
I\left(\ \begin{xy}
{(-4,-4) \ar @{{*}-} (0,0)}, 
{(4,-4) \ar @{{*}-} (0,0)}, 
{(0,0) \ar @{o-o} (0,4)}\end{xy}\ \right)
&=\int_{1>t_1>t_2>0}\frac{dt_1}{t_1}\frac{dt_2}{t_2}
\biggl(\int_0^{t_2}\frac{du}{1-u}\biggr)
\biggl(\int_0^{t_2}\frac{dv}{1-v}\biggr)\\
&=\int_0^1\frac{dt_1}{t_1}\int_0^{t_1}\frac{dt_2}{t_2}
\Biggl(\sum_{m>0}\frac{t_2^m}{m}\Biggr)
\Biggl(\sum_{n>0}\frac{t_2^n}{n}\Biggr)\\
&=\sum_{m,n>0}\frac{1}{mn}
\int_0^1\frac{dt_1}{t_1}\int_0^{t_1}t_2^{m+n}\frac{dt_2}{t_2}\\
&=\sum_{m,n>0}\frac{1}{mn(m+n)}\int_0^1t_1^{m+n}\frac{dt_1}{t_1}\\
&=\sum_{m,n>0}\frac{1}{mn(m+n)^2}=\zeta_{MT,2}(1,1;2). 
\end{align*}

The identity \eqref{eq:MT} implies, in particular, 
the result by Bradley-Zhou \cite{BZ} 
that the Mordell-Tornheim zeta value $\zeta_{MT,r}(k_1,\ldots,k_r;k)$ is 
expressed as a finite sum of MZVs of weight $k_1+\cdots+k_r+k$ and depth $r$. 

\subsection{Certain zeta values of root systems of type $A$}
The third class of examples is a certain type of special values 
of zeta functions of root systems of type $A_N$, 
considered by Komori, Matsumoto and Tsumura \cite{KMT} 
in a study of shuffle relations of MZVs. 
Explicitly, these values are written as 
\[\zeta\biggl(p_1,\ldots,p_a;\begin{array}{c}
q_1,\ldots,q_b\\ r_1,\ldots,r_c\end{array}\biggr)
=\sum_{\substack{l_1>\cdots>l_a>m_1+n_1\\ m_1>\cdots>m_b>0\\ 
n_1>\cdots>n_c>0}}
\frac{1}{l_1^{p_1}\cdots l_a^{p_a}m_1^{q_1}\cdots m_b^{q_b}
n_1^{r_1}\cdots n_c^{r_c}}, \]
for three sequences $(p_1,\ldots,p_a)$, $(q_1,\ldots,q_b)$ and 
$(r_1,\ldots,r_c)$ of positive integers. 
To describe the corresponding diagram, we introduce an abbreviation: 
For a sequence $\bk=(k_1,\ldots,k_n)$ of positive integers, 
we write 
\[\begin{xy}
{(0,-3) \ar @{{*}.o} (0,3)}, 
{(1,-3) \ar @/_1mm/ @{-} _{\bk} (1,3)}
\end{xy}\]
for the vertical diagram 
\[\begin{xy}
{(0,-24) \ar @{{*}-o} (0,-20)}, 
{(0,-20) \ar @{.o} (0,-14)}, 
{(0,-14) \ar @{-} (0,-10)}, 
{(0,-10) \ar @{.} (0,-4)}, 
{(0,-4) \ar @{-{*}} (0,0)}, 
{(0,0) \ar @{-o} (0,4)}, 
{(0,4) \ar @{.o} (0,10)}, 
{(0,10) \ar @{-{*}} (0,14)}, 
{(0,14) \ar @{-o} (0,18)}, 
{(0,18) \ar @{.o} (0,24)}, 
{(1,-24) \ar @/_1mm/ @{-} _{k_n} (1,-14)}, 
{(4,-3) \ar @{.} (4,-11)}, 
{(1,0) \ar @/_1mm/ @{-} _{k_2} (1,10)}, 
{(1,14) \ar @/_1mm/ @{-} _{k_1} (1,24)} 
\end{xy}\]
so that 
\[\zeta(\bk)=I\left(\ \begin{xy}
{(0,-3) \ar @{{*}.o} (0,3)}, 
{(1,-3) \ar @/_1mm/ @{-} _{\bk} (1,3)}
\end{xy}\right). \]
Using this notation, one can verify that 
\begin{equation}\label{eq:KMT}
\zeta(\mathbf{p};\mathbf{q};\mathbf{r})=I\left(\begin{xy}
{(-4,-8) \ar @{{*}.o} (-4,-2)}, 
{(-4,-2) \ar @{-} (0,2)}, 
{(4,-8) \ar @{{*}.o} (4,-2)}, 
{(4,-2) \ar @{-} (0,2)}, 
{(0,2) \ar @{{*}.o} (0,8)}, 
{(-5,-8) \ar @/^1mm/ @{-} ^{\mathbf{q}} (-5,-2)}, 
{(5,-8) \ar @/_1mm/ @{-} _{\mathbf{r}} (5,-2)}, 
{(1,2) \ar @/_1mm/ @{-} _{\mathbf{p}} (1,8)}
\end{xy}\right) 
\end{equation}
for $\mathbf{p}=(p_1,\ldots,p_a)$, $\mathbf{q}=(q_1,\ldots,q_b)$ 
and $\mathbf{r}=(r_1,\ldots,r_c)$.  

In \cite{KMT}, the following relation plays an important role: 
\begin{equation}\label{eq:KMTrel}
\begin{split}
&\zeta\biggl(p_1,\ldots,p_a;\begin{array}{c}
q_1,\ldots,q_b\\ r_1,\ldots,r_c\end{array}\biggr)\\
&=\sum_{j=0}^{q_1-1}\binom{r_1-1+j}{j}\,
\zeta\biggl(p_1,\ldots,p_a,r_1+j;\begin{array}{c}
q_1-j,q_2,\ldots,q_b\\ r_2,\ldots,r_c\end{array}\biggr)\\
&+\sum_{j=0}^{r_1-1}\binom{q_1-1+j}{j}\,
\zeta\biggl(p_1,\ldots,p_a,q_1+j;\begin{array}{c}
q_2,\ldots,q_b\\ r_1-j,r_2,\ldots,r_c\end{array}\biggr). 
\end{split}
\end{equation}

We point out that our expression \eqref{eq:KMT} implies this relation 
quite naturally. 
To do this, we denote by $X$ the 2-labeled poset indicated in \eqref{eq:KMT}, 
and name some vertices as follows: 
\[\begin{xy}
{(-6,-28) \ar @{{*}.o} (-6,-20)}, 
{(-6,-20) \ar @{-} (-6,-14)}, 
{(-6,-14) \ar @{{*}.o} (-6,-2)}, 
{(-10,-14)*++{y_{q_1}} \ar @{.} (-10,-2)*++{y_1}}, 
{(-6,-2) \ar @{-} (0,2)}, 
{(6,-28) \ar @{{*}.o} (6,-20)}, 
{(6,-20) \ar @{-} (6,-14)}, 
{(6,-14) \ar @{{*}.o} (6,-2)}, 
{(10,-14)*++{z_{r_1}} \ar @{.} (10,-2)*++{z_1}}, 
{(6,-2) \ar @{-} (0,2)}, 
{(0,2)*+!U{x} \ar @{{*}.o} (0,10)}, 
{(-7,-28) \ar @/^1mm/ @{-} ^{\mathbf{q}'} (-7,-20)}, 
{(7,-28) \ar @/_1mm/ @{-} _{\mathbf{r}'} (7,-20)}, 
{(1,2) \ar @/_1mm/ @{-} _{\mathbf{p}} (1,10)}
\end{xy}\]
where $\mathbf{q}'=(q_2,\ldots,q_b)$ and $\mathbf{r}'=(r_2,\ldots,r_c)$. 
By \eqref{eq:sum}, one has 
\begin{equation}\label{eq:KMT1}
I(X)=I(X_{y_{q_1}}^{z_{r_1}})+I(X^{y_{q_1}}_{z_{r_1}}). 
\end{equation}
In $X_{y_{q_1}}^{z_{r_1}}$, 
the inequalities $x>z_{r_1}>y_{q_1}$ and $x>y_1>\cdots>y_{q_1}$ hold, 
hence one can consider $q_1$ further refinements by imposing 
$y_j>z_{r_1}>y_{j+1}$ for $j=0,\ldots,q_1-1$ (here we write $y_0=x$). 
Thus the identity 
\begin{equation}\label{eq:KMT2}
I(X_{y_{q_1}}^{z_{r_1}})=\sum_{j=0}^{q_1-1}I(X_j) 
\end{equation}
holds, where $X_j$ is represented by the diagram 
\[\begin{xy}
{(-6,-38) \ar @{{*}.o} (-6,-30)}, 
{(-6,-30) \ar @{-{*}} (-6,-26)}, 
{(-6,-26) \ar @{.o} (-6,-18)}, 
{(-11,-26)*+{y_{q_1}} \ar @{.} (-11,-18)*+{y_{j+1}}}, 
{(-6,-18) \ar @{-} (0,-14)}, 
{(0,-14)*+!U{z_{r_1}} \ar @{{*}-o} (-6,-10)}, 
{(-6,-10) \ar @{.o} (-6,-2)}, 
{(-11,-10)*+{y_j} \ar @{.} (-11,-2)*+{y_1}}, 
{(-6,-2) \ar @{-} (0,2)}, 
{(6,-26) \ar @{{*}.o} (6,-18)}, 
{(6,-18) \ar @{-} (0,-14)}, 
{(0,-14) \ar @{-o} (6,-10)}, 
{(6,-10) \ar @{.o} (6,-2)}, 
{(11,-10)*+{z_{r_1-1}} \ar @{.} (11,-2)*+{z_1}}, 
{(6,-2) \ar @{-} (0,2)}, 
{(0,2)*+!U{x} \ar @{{*}.o} (0,10)}, 
{(-7,-38) \ar @/^1mm/ @{-} ^{\mathbf{q}'} (-7,-30)}, 
{(7,-26) \ar @/_1mm/ @{-} _{\mathbf{r}'} (7,-18)}, 
{(1,2) \ar @/_1mm/ @{-} _{\mathbf{p}} (1,10)}
\end{xy}\]
Moreover, since there are $\binom{r_1-1+j}{j}$ ways to impose 
a total order on the white vertices $y_1,\ldots,y_j$, 
$z_1,\ldots,z_{r_1-1}$, one has 
\begin{equation}\label{eq:KMT3}
I(X_j)=\binom{r_1-1+j}{j}\,
\zeta\biggl(p_1,\ldots,p_a,r_1+j;\begin{array}{c}
q_1-j,q_2,\ldots,q_b\\ r_2,\ldots,r_c\end{array}\biggr). 
\end{equation}
The identities \eqref{eq:KMT2} and \eqref{eq:KMT3} 
expresses the first term of \eqref{eq:KMT1} 
as desired in \eqref{eq:KMTrel}. 
The second is obtained in the same way.

\begin{rem}
In \cite{KMT}, using partial fraction decompositions, 
the identity \eqref{eq:KMTrel} is proved with some variables 
(irrelevant to the decomposition) \emph{complex valued}, 
not necessarily positive integral. 
It seems difficult to apply our method in this paper 
to such functional relations. 
\end{rem}


\end{document}